\newtheorem{theorem}{Theorem}
\newtheorem{lemma}[theorem]{Lemma}
\newtheorem{remark}[theorem]{Remark}
\newcommand{\aut}{\textrm{Aut}}
\begin{document}
\baselineskip=15.5pt
\title[Twisted conjugacy classes]{Twisted conjugacy and quasi-isometric rigidity of irreducible lattices in semisimple Lie groups}  
\author{T. Mubeena}
\address{Department of Mathematics, Government College Kasaragod, Vidhyanagar, Kasaragod 671123, India.}
\author{P. Sankaran}
\address{The Institute of Mathematical Sciences, (HBNI), CIT
Campus, Taramani, Chennai 600113, India.}
\email{mubeenatc@gmail.com}
\email{sankaran@imsc.res.in}

\subjclass[2010]{20E45, 22E40, 20E36\\
Key words and phrases: Twisted conjugacy,  lattices in semisimple Lie groups, quasi-isometry}

\date{}

\begin{abstract}  Let $G$ be a non-compact semisimple Lie group with finite centre and finitely many connected components. 
We show that any finitely generated group $\Gamma$ which is quasi-isometric to an irreducible lattice $G$ 
has the $R_\infty$-property, namely, that there are infinitely many $\phi$-twisted conjugacy classes for every 
automorphism $\phi$ of $\Gamma$.    Also, we show that any lattice in $G$ has the $R_\infty$-property, extending 
our earlier result for irreducible lattices. 
\end{abstract}
\maketitle
\section{Introduction}
An automorphism $\phi:\Gamma\to \Gamma$ of an infinite group $\Gamma$ induces an action of $\Gamma$ on 
itself defined as $g.x=gx\phi(g^{-1})$.  The orbits of this action are called $\phi$-twisted conjugacy classes; elements 
of $\Gamma$ belonging to the same orbits are said to be $\phi$-twisted conjugates.  The orbit space is denoted $\mathcal{R}(\phi)$ and  
its cardinality, denoted $R(\phi)$, is called the Reidemeister number of $\phi$.  
We write $R(\phi)=\infty$ if $\mathcal{R}(\phi)$ is infinite.  The group $\Gamma$ is said to have the $R_\infty$-{\it property} if 
$R(\phi)=\infty$ for all automorphisms $\phi$ of $\Gamma$. The notion of Reidemeister number arose in Nielsen fixed point 
theory; see \cite{jiang}.  The problem of classifying (finitely generated) groups that have the $R_\infty$-property was 
proposed by Fel'shtyn and Hill \cite{fh}.


The $R_\infty$-property does not behave well with respect to finite index subgroup, as has first been observed by 
Gon\c calves and Wong \cite{gw} who 
showed that the infinite dihedral group $D_\infty$ has the $R_\infty$-property although the infinite cyclic group does not. 
Thus $R_\infty$-property is a property that is not {\it geometric}---that is, it is not a quasi-isometry invariant among the class of all finitely generated groups.
On the other hand, the work of Levitt and Lustig \cite{ll} shows that any torsionless non-elementary hyperbolic group 
has the $R_\infty$-property.  This has been extended by Fel'shtyn \cite{felshtyn} who removed the restriction of torsionlessness.  
He also showed in \cite{felshtyn2} that $R_\infty$-property holds for relatively hyperbolic groups. 

The purpose of this note is to show that 
the $R_\infty$-property is geometric for the class of all finitely generated groups that are quasi-isometric to 
irreducible lattices in real semisimple Lie groups with finite centre and finitely many connected components. The classification 
of such groups is a deep result that has been achieved by the effort of several mathematicians including Eskin, Schwartz, Farb, Pansu, Kleiner and Leeb.  See \cite{farb} for a survey on this result.
The definition of quasi-isometry will be recalled in \S2.   

Recall that a {\it lattice} in a non-compact real Lie group $G$ with finitely many connected components is a discrete subgroup 
$\Lambda \subset G$ such that 
$G/\Lambda$ has finite volume with respect to the $G$-invariant measure associated to a Haar measure on $G$.  A lattice 
is {\it uniform} if $G/\Lambda$ is compact.   

A lattice in $G$ is said to be {\it irreducible} if, for any {\it non-compact} closed normal subgroup $H$ of $G$, 
the image of $\Lambda$ under the quotient $G\to G/H$ is dense.    This definition of irreducibility differs from 
the definition used in \cite{raghunathan}, which assumes that $G$ has no compact factors. 
(So, according to that definition, if $G$ admits a non-trivial compact factor, 
then no lattice in $G$ is irreducible.)
If $M$ is the maximal compact normal 
subgroup of $G$ then $\Lambda/\Lambda\cap M$ is an irreducible lattice in $\bar G:=G/M$.  Since $\Lambda\cap M$ is 
finite, $\Lambda$ is quasi-isometric to $\Lambda/\Lambda\cap M,$ and so 
either definition leads to the {\it same} quasi-isometry class of 
groups.  Note that, by a theorem of Mostow, $\bar{G}$ is connected.  Since $M$  
contains the centre of $G$, $\bar G$ is of adjoint type.   A lattice which is not irreducible will be called {\it reducible}.

We now state the main result of this note.   

\begin{theorem}\label{main}
Let $\Gamma$ be a finitely generated group which is quasi-isometric to an irreducible lattice $\Lambda$ in a non-compact semisimple 
real Lie group with finite centre and finitely many connected components.  
Then $\Gamma$ has the $R_\infty$-property.
\end{theorem}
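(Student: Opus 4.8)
The plan is to reduce the quasi-isometric rigidity statement to a statement about the lattice itself, and then to establish the $R_\infty$-property for the lattice by a combination of Mostow-type rigidity and a counting argument on twisted conjugacy classes.
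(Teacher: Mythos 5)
Your proposal stops at the level of strategy, and the strategy as stated has a fatal gap at its very first step. ``Reducing to the lattice'' via quasi-isometric rigidity does \emph{not} reduce the problem to a statement about the lattice: Theorem~\ref{rigidlattices} only yields an exact sequence $1\to F\to \Gamma\to \Lambda_0\to 1$ with $F$ finite and $\Lambda_0$ a lattice in $\bar G=G/M$ ($M$ the maximal compact normal subgroup). The $R_\infty$-property is notoriously \emph{not} preserved under finite extensions or passage to finite-index subgroups---the infinite dihedral group has the property while its index-two subgroup $\mathbb{Z}$ does not---so knowing that $\Lambda_0$ has the $R_\infty$-property says nothing, by itself, about $\Gamma$. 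This transfer is exactly where the work lies, and your outline is silent on it. The paper handles it by exploiting torsion: since $\bar G$ is linear, one chooses a torsion-free finite-index sublattice $\Lambda_1\subset\Lambda_0$, pulls it back to $\Gamma_1=\eta^{-1}(\Lambda_1)$, and passes to a finite-index \emph{characteristic} subgroup $\Gamma_0\subset\Gamma_1$ of $\Gamma$. In the resulting sequence $1\to F_0\to \Gamma_0\to \Lambda_0'\to 1$ the finite kernel $F_0$ is precisely the set of torsion elements of $\Gamma_0$ (because $\Lambda_0'$ is torsion-free), hence $F_0$ is characteristic in $\Gamma_0$; only then can Lemma~\ref{characteristic} be applied, first to push $R_\infty$ from $\Lambda_0'$ up to $\Gamma_0$, and then from the characteristic finite-index subgroup $\Gamma_0$ up to $\Gamma$.

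There is a second gap: your plan to prove the $R_\infty$-property for ``the lattice'' by Mostow-type rigidity and a counting argument implicitly assumes that lattice is irreducible (that case is the content of \cite{ms-tg}, together with \cite{ll} and \cite{felshtyn} in rank one). But the lattice $\Lambda_0$ produced by quasi-isometric rigidity need \emph{not} be irreducible even though $\Lambda$ is: when $\Lambda$ is uniform, all uniform lattices in $\bar G$ are quasi-isometric by Theorem~\ref{classlattices}(a), so $\Lambda_0$ may well be reducible. Handling this requires the additional arguments of Lemma~\ref{product} and Theorem~\ref{reducible}: an automorphism of a torsion-free reducible lattice permutes the irreducible factors of a finite-index characteristic product subgroup, and one manufactures infinitely many twisted conjugacy classes by concentrating on a single factor and composing the isomorphisms around each cycle of the permutation. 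Without both ingredients---the characteristic-subgroup/torsion argument for the finite extension, and the reducible-lattice case---your outline cannot be completed into a proof.
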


When $\Lambda$ is uniform, it turns out that it is quasi-isometric to any other uniform lattice in $G$.  So the 
above theorem implies that {\it any} uniform lattice in $G$ has the $R_\infty$-property.  However, we need to first 
establish the theorem in this special case first, when $G$ is connected and has trivial centre.  The $R_\infty$-property 
for irreducible lattices was proved in \cite{ms-tg} and will be an ingredient of the proof of the main theorem in the case 
$\Lambda$ is nonuniform.   
 The major result that will be needed to prove the above theorem, besides the results on the $R_\infty$-property for 
 lattices, is 
the classification theorem for groups quasi-isometric to irreducible lattices in $G$.  (See Theorem \ref{rigidlattices}.) 
 Perhaps, since very few 
classes of groups are known for which the $R_\infty$-property is geometric, 
the above result is surprising, since $R_\infty$-property is known to be not even an invariant under 
passage to a finite index subgroup.

\section{Quasi-isometric rigidity of irreducible lattices}

Let $(X,d_X)$ and $(Y,d_Y)$ be metric spaces and let $\lambda\ge1, \epsilon>0$.  A $(\lambda,\epsilon)$-{\it quasi-isometric 
embedding} is a set-map $f:X\to Y$ such that, for all $x_0,x_1\in X$, the following coarse bi-Lipschitz condition holds:
\[ -\epsilon+(1/\lambda) d_X(x_0,x_1)\le d_Y(f(x_0),f(x_1))\le \lambda d_X(x_0,x_1)+\epsilon.\]
If, in addition, there exists a $C>0$ such that, for any $y\in Y$, there exists an $x\in X$ such that $d_Y(f(x),y)\le C$, 
one says that $f$ is a {\it quasi-isometry.}
In this case there exists a quasi-isometry
$g:Y\to X$ for possibly a different set of constants $\lambda', \epsilon', C'$.  The map $g$ is called a {\it quasi-inverse} of $f$. 
One says that $(X,d_X),(Y,d_Y)$ are 
{\it quasi-isometrically equivalent} if there exists a $(\lambda,\epsilon, C)$-quasi-isometry $f:X\to Y$ for some set of 
constants $\lambda, \epsilon, C$. 
This is an `equivalence relation' on the class of all metric spaces.

If $\Gamma$ is a group with a finite generating set $S$, 
then $\Gamma$ becomes a metric space $(\Gamma, d_S)$ where $d_S$ is the {\it word metric} defined as $d_S(x,y)=\ell_S(x^{-1}y),$ 
the length of the shortest expression for $x^{-1}y$ in the `alphabets' $S\cup S^{-1}$.  
Changing the (finite) generating set changes the word metric, but not   
the quasi-isometry equivalence class of $(\Gamma,d_S)$.  It is an important problem, for a given group $\Gamma$, 
to classify all groups which are quasi-isometrically equivalent to $\Gamma$ with a word metric. 
It is easy to see that, if there is a homomorphism $\phi:\Gamma\to \Lambda$ whose kernel and cokernel are 
finite, then $\phi$ is a quasi-isometry.  When $\Lambda$ is an irreducible lattice in a semisimple Lie group, 
the converse is also true. More precisely, we have the following classification theorem for groups that are quasi-isometric to irreducible lattices; see
\cite[Theorem I]{farb}. 

\begin{theorem} \label{rigidlattices} {\em ({\bf QI rigidity of irreducible lattices})}
Let $\Gamma$ be a finitely generated group which is quasi-isometric to an irreducible lattice $\Lambda$ in a non-compact 
semisimple connected Lie group $G$ with trivial centre.  
Then there is an exact sequence 
\[1\to F\to \Gamma\to \Lambda_0\to 1\eqno(1)\]
where $F$ is finite and $\Lambda_0$ is a lattice in $G$. 
\end{theorem}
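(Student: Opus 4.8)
The plan is to realise $\Gamma$ as a group that quasi-acts on the natural geometric model for $\Lambda$, and then to rigidify this quasi-action into a genuine homomorphism into the isometry group of the symmetric space $X=G/K$. Concretely, I would fix a quasi-isometry $f:\Gamma\to\Lambda$ together with a quasi-inverse, and transport left multiplication by $\gamma\in\Gamma$ on $\Gamma$ into a self-quasi-isometry $q_\gamma$ of $\Lambda$, hence of the space $M$ on which $\Lambda$ acts geometrically. This produces a uniform quasi-action $\Gamma\to QI(M)$ into the quasi-isometry group of $M$. By the Milnor--\v{S}varc lemma this quasi-action is cobounded, and because $f$ is a genuine quasi-isometry its fibres are bounded, so the quasi-action is also metrically proper. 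The entire problem then reduces to identifying $QI(M)$ with $\textrm{Isom}(M)$ and to reading off the kernel and image of the resulting homomorphism.

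The argument splits according to whether $\Lambda$ is uniform. When $\Lambda$ is uniform one takes $M=X$, and the key input is the quasi-isometric rigidity of $X$: every self-quasi-isometry of $X$ lies at bounded distance from an isometry, so that the natural map $\textrm{Isom}(X)\to QI(X)$ is an isomorphism. In higher rank this is the theorem of Kleiner--Leeb; for the quaternionic and Cayley hyperbolic spaces it is due to Pansu; and the real and complex hyperbolic cases $X=\mathbb{H}^n_{\mathbb{R}},\ \mathbb{H}^n_{\mathbb{C}}$ need the separate circle of ideas around quasiconformal and quasi-M\"obius boundary maps (Tukia, Gabai, Casson--Jungreis, and Chow). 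When $\Lambda$ is nonuniform one takes instead for $M$ the \emph{neutered space} obtained by deleting an equivariant family of disjoint horoballs from $X$, and the relevant rigidity statement---that self-quasi-isometries of $M$ lie at bounded distance from isometries coming from the commensurator of $\Lambda$---is the theorem of Schwartz in rank one and of Eskin in higher rank.

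Granting the appropriate rigidity theorem, the quasi-action $\Gamma\to QI(M)\cong\textrm{Isom}(M)$ straightens to an honest homomorphism $\rho:\Gamma\to\textrm{Isom}(M)$, and it remains to analyse $\ker\rho$ and $\rho(\Gamma)$. Metric properness of the quasi-action forces the kernel $F:=\ker\rho$ to be finite, since an element acting at bounded distance from the identity together with the uniform bound on fibres of $\rho$ leaves only finitely many possibilities. Coboundedness of the quasi-action forces the image $\Lambda_0:=\rho(\Gamma)$ to be discrete with finite covolume, that is, a lattice in $G$ (here one uses that $G$ is connected with trivial centre, so that $\textrm{Isom}(X)$ has identity component $G$; in the nonuniform case the image lands in the commensurator of $\Lambda$ and discreteness is again supplied by properness). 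This yields exactly the exact sequence $1\to F\to\Gamma\to\Lambda_0\to 1$.

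The genuine content, and the main obstacle, lies entirely in the large-scale geometric rigidity of the model spaces: the Kleiner--Leeb and Pansu theorems in the uniform higher-rank and exotic rank-one cases, and the Schwartz--Eskin analysis of neutered spaces for nonuniform lattices. Each rests on delicate asymptotic geometry---asymptotic cones and Tits boundaries in higher rank, quasiconformal analysis on the ideal boundary in rank one---and it is precisely these results, assembled in the survey \cite{farb}, that make the theorem deep. By contrast, the organisational steps (setting up the quasi-action, invoking Milnor--\v{S}varc, and extracting $F$ and $\Lambda_0$ from properness and coboundedness) are comparatively formal once the identification of $QI(M)$ with $\textrm{Isom}(M)$ is in hand.
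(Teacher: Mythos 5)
The paper never proves Theorem \ref{rigidlattices}: it is quoted directly from Farb's survey \cite[Theorem I]{farb}, with all of the substantive content (Kleiner--Leeb, Pansu, Eskin, Schwartz, and the rank-one convergence-group/quasiconformal results of Tukia, Gabai, Casson--Jungreis and Chow) living in the literature that survey assembles. Your outline is a correct reconstruction of precisely that literature's argument---uniform proper cobounded quasi-action on the symmetric space or neutered space, rigidification via the appropriate QI-rigidity theorem, finiteness of the kernel from properness, and latticeness of the image from coboundedness---so it takes essentially the same route the paper is implicitly relying on by citation.
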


Next we state the following companion theorem which gives the quasi-isometry equivalence classes 
among lattices in $G$. See \cite[Theorem II]{farb}. 

\begin{theorem} \label{classlattices} {\em({\bf QI classification of lattices})}
Let $G$ be a non-compact semisimple Lie group with finite centre and without compact factors. Then:\\
(a) All uniform lattices in $G$ form a single quasi-isometry class. \\
(b) If $G$ is not locally isomorphic to $SL(2,\mathbb{R})$, two irreducible nonuniform lattices $\Gamma_0, \Gamma_1$ are quasi-isometrically equivalent to each other if and only if there exists a $g\in G$ such that $\Gamma_0\cap g\Gamma_1 g^{-1}$ is a finite index subgroup of both 
$\Gamma_0$ and $g\Gamma_1 g^{-1}$. 
If $G$ is locally isomorphic to $SL(2,\mathbb{R})$, 
then {\em all} nonuniform lattices of $G$ form a single quasi-isometry class.
\end{theorem}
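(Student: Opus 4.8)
The plan is to treat the uniform and nonuniform cases by entirely different mechanisms, since the underlying coarse geometry is quite different in the two situations. For part (a), let $K\subset G$ be a maximal compact subgroup and let $X=G/K$ be the associated symmetric space of non-compact type, equipped with a $G$-invariant Riemannian metric; then $X$ is a proper geodesic space on which $G$ acts by isometries. A uniform lattice $\Gamma\subset G$ acts properly discontinuously and cocompactly on $X$, so the Milnor--\v{S}varc lemma applies and shows that $\Gamma$ is finitely generated and quasi-isometric to $X$. As this holds for every uniform lattice and quasi-isometry is an equivalence relation, all uniform lattices fall into the single quasi-isometry class of $X$, which is (a).

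For part (b), one direction is elementary and I would dispose of it first. If $g\in G$ is such that $\Gamma_0\cap g\Gamma_1 g^{-1}$ has finite index in both $\Gamma_0$ and $g\Gamma_1 g^{-1}$, then each of these lattices is quasi-isometric to the common finite-index subgroup (a finite-index subgroup is quasi-isometric to the ambient group for any choice of word metrics), and conjugation by $g$ is an isometry of $G$ restricting to a bijective isometry $\Gamma_1\to g\Gamma_1 g^{-1}$; composing the resulting equivalences shows that $\Gamma_0$ and $\Gamma_1$ are quasi-isometric. The real content is the converse: a quasi-isometry between nonuniform lattices must arise from such a commensuration. Here the essential input is the quasi-isometric rigidity of nonuniform lattices, due to Schwartz in the rank-one case and to Eskin (and Eskin--Farb) in higher rank. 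A nonuniform lattice is quasi-isometric not to $X$ itself but to a \emph{neutered space}, namely $X$ with an equivariant family of disjoint open horoballs (one about each cusp) deleted; the deleted horoballs leave behind a pattern of horospherical boundary components whose coarse geometry is rigid. The Schwartz--Eskin theorem asserts that any quasi-isometry of such neutered spaces permutes the horospherical components and is, up to bounded distance, the restriction of an isometry of $X$ lying in $G$. Transporting this back to the lattices would produce the required $g\in G$ and force $\Gamma_1$ and $g\Gamma_0 g^{-1}$ to be commensurable, yielding the stated criterion.

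The excluded case where $G$ is locally isomorphic to $SL(2,\mathbb{R})$ must be handled separately, because rigidity fails there. A nonuniform lattice then has cusps with one-dimensional cross-sections, so the lattice is virtually free of finite rank $\ge 2$ rather than rigid; since any two non-elementary finitely generated virtually free groups are quasi-isometric (all being quasi-isometric to the regular tree of valence $\ge 3$), every nonuniform lattice in $SL(2,\mathbb{R})$ collapses into one quasi-isometry class, with no commensurability restriction surviving. The main obstacle is unquestionably the converse direction of (b): promoting a purely coarse map between neutered spaces to a genuine isometry of the symmetric space. This is precisely where the deep geometric and analytic work of Schwartz and Eskin is required, and I would not attempt to reprove it; the honest route is to invoke their rigidity theorems and then assemble the commensurability statement from them.
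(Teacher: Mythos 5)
Your proposal is correct and follows essentially the same route as the paper: the paper states this result as a citation of Farb's survey (Theorem II there, which packages exactly the Schwartz and Eskin rigidity theorems you invoke for the hard direction of (b)), and it justifies part (a) by the same \v{S}varc--Milnor argument and the $SL(2,\mathbb{R})$ case by the same observation that nonuniform lattices there are virtually free of finite rank. Since the paper makes no attempt to reprove the deep rigidity results either, your treatment matches its proof in both structure and level of detail.
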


Note that part (a) of the above theorem is classical:  By Svarc-Milnor Lemma \cite[Ch. I.8]{bh}, any 
uniform lattice in $G$ is quasi-isometric to $G$ where $G$ is endowed with a $G$-invariant Riemannian 
metric.  Also, when $G$ is locally isomorphic to $SL(2,\mathbb{R})$, any nonuniform lattice contains a 
finite index subgroup which is isomorphic to a free group of finite rank. Thus any such lattice is 
quasi-isometric to the free group of rank $2$.

\begin{remark}
(i) The assumption that $G$ has trivial centre seems to be required in Theorem \ref{rigidlattices}, although it is not explicitly 
stated in \cite[Theorem I]{farb}.  Indeed, suppose that 
$\Lambda$ is a certain arithmetic nonuniform lattice in an appropriate covering group 
of $G=Spin(n,2)$ as constructed by Millson \cite{millson} (cf. Deligne \cite{deligne},  \cite{raghunathan84}) so that 
$\Lambda$ does not contain any torsionless finite index subgroup.  
 Let $\Gamma:=\Lambda/Z(\Lambda)$.  Then $\Gamma\subset G/Z(G)$ is a torsionless lattice which is 
 evidently quasi-isometric to $\Lambda$.  But there can be no surjection $\eta:\Gamma\to \Lambda_0$ with finite 
 kernel for 
 {\it any} lattice $\Lambda_0$ in $G$.   
 For, $\Lambda_0$ would then be commensurable with $g\Lambda g^{-1}$ 
 for some $g\in G$ by Theorem \ref{classlattices} and hence $\Lambda_0$ would have torsion elements.  
 Since $\Gamma$ is torsionless,  $\ker(\eta)$ cannot be finite.
\end{remark}

 When the lattice $\Lambda$ in Theorem \ref{rigidlattices} is nonuniform (resp. uniform), so is $\Lambda_0$.
This follows from part (a) of Theorem \ref{classlattices}, and,   by part (b) of the same theorem, 
if $\Lambda$ is nonuniform, $\Lambda_0$ is {\it irreducible}. 
However, this is not the case in general when $\Lambda$ is uniform, since any two uniform lattices are 
quasi-isometric.

\section{Twisted conjugacy in reducible lattices}
 Suppose that $G$ is a connected semisimple Lie group with trivial centre and having no compact factors.  Suppose that 
 $\Lambda\subset G$ is a reducible lattice.  Then there exists a factorization $G=H_1\times \cdots \times H_n$ into 
 semisimple Lie groups $H_j$ 
 where $\Lambda\cap H_j=:\Lambda_j$ is an irreducible lattice in $H_j, 1\le j\le n$, $\Lambda_j\subset \Lambda$ 
 is a normal subgroup of $\Lambda$, and $\Lambda_i\cap \Lambda_j$ is trivial if $i\ne j$.    Moreover 
 $\Lambda_0:=\prod_{1\le j\le n}\Lambda_j$ is a finite index normal 
 subgroup of $\Lambda$.  Also, the $j$-th projection 
 $G\to H_j$ maps $\Lambda$ onto an irreducible lattice $\tilde{\Lambda}_j$ which contains $\Lambda_j$ as a 
 finite index normal subgroup.   Evidently $\Lambda$ is a finite index subgroup of $\prod_{1\le j\le n}\tilde \Lambda_j$.

We have the following lemma.

\begin{lemma}  \label{product}
We keep the above notation.   Assume that $\Lambda$ is a torsionless lattice in a connected semisimple Lie group $G$
with trivial centre and without compact factors.  \\
(i) Suppose that $\phi:\Lambda\to \Lambda$ is an automorphism.  Then there exists a permutation $\sigma\in S_n$ 
such that $\phi(\Lambda_i)\subset \tilde \Lambda_{\sigma(i)} ~\forall i\le n$.\\
(ii) There exists a finite index characteristic subgroup $\Gamma_0$ of $\Lambda$ 
of the form $\Gamma_0=\Gamma_1\times \cdots \times \Gamma_n$ where $\Gamma_j$ 
is an irreducible lattice in $H_j$ for $1\le j\le n$.  
\end{lemma}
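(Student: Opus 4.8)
The plan is to exploit the fact that each constituent $\Lambda_j$, and its overgroup $\tilde\Lambda_j$, is \emph{centreless}. Since $G$ has trivial centre, every simple factor of every $H_j$ has trivial centre, so the Borel density theorem (applied to the Zariski-dense subgroups $\tilde\Lambda_j$ and their finite-index subgroups) gives $Z(\tilde\Lambda_j)=Z(\Lambda_j)=1$ and, more generally, that any finite normal subgroup of $\tilde\Lambda_j$ is trivial (its centraliser has finite index, hence is Zariski dense, forcing the subgroup into $Z(H_j)=1$). The whole argument for (i) rests on a single structural fact, which I would isolate first.

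\emph{Claim.} An irreducible lattice $\Delta$ in a connected semisimple Lie group $H$ with trivial centre and no compact factors admits no two \emph{nontrivial} commuting normal subgroups. To prove this, suppose $A,B\triangleleft\Delta$ with $[A,B]=1$ and both nontrivial. By the remark above neither can be finite, so both are infinite. If $H$ is simple, then $A$ is Zariski dense (its Zariski closure is a positive-dimensional normal, hence all of $H$), whence $B\subseteq C_H(\overline{A})=Z(H)=1$, a contradiction. If $H$ has at least two simple factors then its real rank is $\ge 2$, so by Margulis's normal subgroup theorem each of $A,B$ is finite or of finite index; being infinite they have finite index, hence are Zariski dense, and the same centraliser computation gives $B=1$. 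This Claim is the step I expect to be the main obstacle, since it is where the deep input (Borel density together with the normal subgroup theorem in the multi-factor, higher-rank case) is needed; note that the rank-one case causes no trouble because a rank-one $H$ without compact factors is automatically simple.

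Granting the Claim, I would prove (i) by a counting argument. Let $p_k\colon\Lambda\to\tilde\Lambda_k$ be the restriction of the projection $G\to H_k$, and set $B_{k,i}:=p_k(\phi(\Lambda_i))$. Because $\phi(\Lambda_i)\triangleleft\Lambda$ and $p_k$ is surjective, each $B_{k,i}$ is a normal subgroup of the irreducible lattice $\tilde\Lambda_k$; and since $[\Lambda_i,\Lambda_{i'}]=1$ for $i\neq i'$ we get $[B_{k,i},B_{k,i'}]=1$. By the Claim, for each fixed $k$ at most one $B_{k,i}$ is nontrivial. On the other hand $\phi(\Lambda_i)\neq 1$ and $\bigcap_k\ker p_k=\{1\}$, so for each fixed $i$ at least one $B_{k,i}$ is nontrivial. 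Arranging the $B_{k,i}$ in an $n\times n$ array, every column carries at most one nontrivial entry and every row at least one; with $n$ rows and $n$ columns this forces exactly one nontrivial entry in each row and each column, defining a permutation $\sigma\in S_n$ with $B_{\sigma(i),i}\neq1$ and $B_{k,i}=1$ for $k\neq\sigma(i)$. Thus $\phi(\Lambda_i)\subseteq\bigcap_{k\neq\sigma(i)}\ker p_k=\Lambda\cap H_{\sigma(i)}=\Lambda_{\sigma(i)}\subseteq\tilde\Lambda_{\sigma(i)}$, which is the assertion of (i) (indeed in the sharper form $\phi(\Lambda_i)\subseteq\Lambda_{\sigma(i)}$).

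Finally, (ii) will follow at once from this sharper form. Since $\sigma$ is a permutation, $\phi(\Lambda_0)=\prod_i\phi(\Lambda_i)\subseteq\prod_i\Lambda_{\sigma(i)}=\Lambda_0$ for \emph{every} automorphism $\phi$; applying this to $\phi^{-1}$ and then $\phi$ yields the reverse inclusion, so $\phi(\Lambda_0)=\Lambda_0$. Hence $\Lambda_0=\Lambda_1\times\cdots\times\Lambda_n$ is already a characteristic subgroup: it is of finite index in $\Lambda$, it is an internal direct product (the $\Lambda_j$ pairwise commute and meet trivially), and each $\Lambda_j$ is an irreducible lattice in $H_j$. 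I would therefore take $\Gamma_0:=\Lambda_0$ and $\Gamma_j:=\Lambda_j$. (If one insists on working only with the weaker conclusion $\phi(\Lambda_i)\subseteq\tilde\Lambda_{\sigma(i)}$ stated in (i), one instead descends from $\prod_j\tilde\Lambda_j$ to a product subgroup stabilised by the permutation action, but the present approach makes this unnecessary.)
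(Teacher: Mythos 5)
Your proposal is correct, but it routes the key step through different machinery than the paper, and it obtains a stronger conclusion in part (ii). The paper never isolates your Claim; instead, for a fixed $i$ and a projection $\pi_j$ with $\phi_j(\Lambda_i)\ne 1$, it studies the specific pair $A=\phi_j(\Lambda_i)$, $B=\phi_j(\prod_{k\ne i}\Lambda_k)$ and uses crucially that $A\cdot B=\phi_j(\Lambda_0)$ has \emph{finite index} in $\tilde\Lambda_j$: Borel density kills $A\cap B$, and if both $A,B$ were infinite then the finite-index subgroup $A\times B$ would exhibit $\tilde\Lambda_j$ as a reducible lattice, a contradiction; a second Borel density argument rules out the finite-but-nontrivial case. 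This avoids Margulis's normal subgroup theorem entirely, whereas your Claim --- stated for an arbitrary pair of commuting nontrivial normal subgroups, with no finite-index hypothesis on $A\cdot B$ --- genuinely needs NST (or a comparable substitute) in the multi-factor case. Note that in your actual application the finite-index information is available, since $\prod_i B_{k,i}=p_k(\phi(\Lambda_0))$ has finite index in $\tilde\Lambda_k$, so you could have run the paper's softer argument and dispensed with NST; as written, your proof trades elementarity for a cleaner, reusable structural statement. The second genuine difference is in (ii): you extract the sharper inclusion $\phi(\Lambda_i)\subseteq\Lambda_{\sigma(i)}$ (which, incidentally, also follows from the paper's own argument, though the paper only records $\phi(\Lambda_i)\subseteq\tilde\Lambda_{\sigma(i)}$), and conclude that $\Lambda_0$ itself is characteristic, with $\Gamma_0=\Lambda_0$. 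The paper instead intersects the subgroups $\phi(\Lambda_0)$ over all automorphisms $\phi$, which implicitly relies on the fact that a finitely generated group has only finitely many subgroups of a given finite index to guarantee the intersection has finite index; your route makes that step unnecessary and yields a more explicit $\Gamma_0$. Both arguments are sound; the counting argument you use to produce $\sigma$ (each column of the array $(B_{k,i})$ has at most one nontrivial entry, each row at least one) is a correct repackaging of the paper's ``at most one $i$ for each $j$, at most one $j$ for each $i$'' step, with pigeonhole replacing the paper's appeal to $\phi^{-1}$.
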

\begin{proof} (i).  
Denote by $\pi_j$ the natural surjection $\pi_j:\Lambda\to \tilde{\Lambda}_j$ and by $\phi_{j}$  the 
composition 
$ \Lambda\stackrel{\phi}{\to} \Lambda\stackrel{\pi_j}{\to} \tilde{\Lambda}_j$. 
Fix $i\le n$.  Then there exists a $j$ such that $\phi_j(\Lambda_i)$ is nontrivial.   
Set $A:=\phi_{j}(\Lambda_i), B:=\phi_{j}( \prod_{k\ne i} \Lambda_k)$.  
Then both $A$ and $B$ are normal subgroups of $\tilde\Lambda_j$ and $A.B=\phi_j(\Lambda_0)$ is a 
finite index normal subgroup of $\tilde\Lambda_j$.  
Clearly $ab=ba, \forall a\in A, b\in B$ since $xy=yx$ for $x\in \Lambda_i, y\in \prod_{k\ne i} \Lambda_k$.   
So $A\cap B$ is contained in the centre of 
$\phi_j(\Lambda_0)$.  By the Borel density theorem, since $H_j$ has trivial centre the same is true of $\phi_i(\Lambda_0)$ as well. 
Hence $A\cap B$ is trivial.  Therefore $\phi_j(\Lambda_0)=A\times B$.    
If both $A$ and $B$ are infinite, then $\phi_j(\Lambda_0)$, and hence $\tilde\Lambda_j$,  would be reducible.  
Therefore one of $A,B$ must be finite 
and the other a finite index subgroup of $\tilde\Lambda_j$. 
 
Suppose that $A$ is finite and $B$ has finite index in $\tilde\Lambda_j$. 
Since every element of $A$ commutes with every element of $B$,  this implies, again by Borel density theorem, 
that $A$ is contained in the centre of $H_j$ and so $A$ is 
trivial---contrary to our choice of $j$.  Therefore $A$ must have finite index $\tilde\Lambda_j$ and $B$ must be finite.
The same argument shows that $B$ is trivial.  

Thus, given any $j$, we see that there is at most one $i$ so that $\phi_j(\Lambda_i)$ is nontrivial.   The same argument 
applied to $\phi^{-1}$ shows that for any $i$, there is at most one $j$ such that $\phi_j(\Lambda_i)$ is nontrivial. 
Therefore there is a permutation $\sigma\in S_n$ such that $\phi_j(\Lambda_i)$ is nontrivial if and only if $j=\sigma(i)$ for $1\le i\le n$,  
and hence $\phi(\Lambda_i)\subset \tilde \Lambda_{\sigma(i)}, 1\le i\le n$.

(ii).  By (i) $\phi(\Lambda_0)$ is a product $\prod_{1\le j\le n}\Lambda'_j$, which has finite index in $\Lambda$.  It follows that 
taking 
intersection of all subgroups $\phi(\Lambda_0)$ as $\phi$ varies over all automorphisms of $\Lambda$ we obtain a finite index characteristic 
subgroup $\Gamma_0\subset \Lambda_0$ of the form $\Gamma_1\times \cdots \times \Gamma_n$.  
\end{proof} 

Before proceeding further, we need the following lemma, 
 which was proved in \cite[Lemma 2.2]{ms-cmb}. For the convenience of the reader 
we have included the short proof. 

\begin{lemma} \label{characteristic}
Consider an exact sequence of groups    
\[1\to N\stackrel{j}{\hookrightarrow} \Gamma\stackrel{\eta}{\to}\Lambda\to 1\]
where $N$ is characteristic in $\Gamma$.    (i) If $\Lambda$ has the $R_\infty$-property, then so does 
$\Gamma$. (ii) If $\Lambda$ is finite and $N$ has the $R_\infty$-property, then so does $\Gamma$.   
\end{lemma}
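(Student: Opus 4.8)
The plan is to handle the two parts separately, using throughout that $N$ is characteristic, so that every automorphism $\phi$ of $\Gamma$ satisfies $\phi(N)=N$; hence $\phi$ restricts to an automorphism $\psi:=\phi|_N$ of $N$ and descends to an automorphism $\bar\phi$ of $\Lambda$ determined by $\bar\phi\circ\eta=\eta\circ\phi$. Fix an arbitrary automorphism $\phi$ of $\Gamma$; in each part the goal is to show $R(\phi)=\infty$.

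For part (i) I would first record that $\eta$ intertwines the two twisted actions: from $\bar\phi\circ\eta=\eta\circ\phi$ one computes $\eta(gx\phi(g^{-1}))=\eta(g)\eta(x)\bar\phi(\eta(g))^{-1}$, so $\eta$ maps each $\phi$-twisted class of $\Gamma$ into a single $\bar\phi$-twisted class of $\Lambda$ and thereby induces a map $\eta_*:\mathcal{R}(\phi)\to\mathcal{R}(\bar\phi)$. Since $\eta$ is onto, $\eta_*$ is onto as well, so $R(\phi)\ge R(\bar\phi)$. As $\Lambda$ has the $R_\infty$-property, $R(\bar\phi)=\infty$ and hence $R(\phi)=\infty$. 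This part is purely formal.

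For part (ii) the strategy is to compare the $\phi$-twisted classes of $\Gamma$ that meet $N$ with the $\psi$-twisted classes of $N$. I would introduce $\Gamma_\phi:=\{g\in\Gamma: g\phi(g^{-1})\in N\}$ and, using $\eta(g\phi(g^{-1}))=\eta(g)\bar\phi(\eta(g))^{-1}$, identify it as $\Gamma_\phi=\eta^{-1}(\mathrm{Fix}(\bar\phi))$; this is a subgroup containing $N$ with $[\Gamma_\phi:N]=|\mathrm{Fix}(\bar\phi)|\le|\Lambda|<\infty$. One checks that the twisted action $g\cdot x=gx\phi(g^{-1})$ preserves $N$ for $g\in\Gamma_\phi$ (since $gxg^{-1}\in N$ and $g\phi(g^{-1})\in N$), giving a genuine action of $\Gamma_\phi$ on the set $N$ whose restriction to $N\subseteq\Gamma_\phi$ is exactly the $\psi$-twisted action. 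The key point is that if $x,y\in N$ and $y=gx\phi(g^{-1})$ with $g\in\Gamma$, then $g\phi(g^{-1})=(gxg^{-1})^{-1}y\in N$, forcing $g\in\Gamma_\phi$; thus the $\phi$-twisted classes of $\Gamma$ meeting $N$ are precisely the $\Gamma_\phi$-orbits in $N$, while the $\psi$-twisted classes are the $N$-orbits.

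Finally, since $N$ has index at most $|\Lambda|$ in $\Gamma_\phi$, each $\Gamma_\phi$-orbit in $N$ splits into at most $[\Gamma_\phi:N]\le|\Lambda|$ many $N$-orbits, the $N$-orbits in a fixed $\Gamma_\phi$-orbit being indexed by the double cosets $N\backslash\Gamma_\phi/(\Gamma_\phi)_x$. Because $N$ has the $R_\infty$-property there are infinitely many $\psi$-classes, and as they are distributed among the $\phi$-classes meeting $N$ with multiplicity bounded by $|\Lambda|$, there must be infinitely many such $\phi$-classes, so $R(\phi)=\infty$. I expect the main obstacle to lie in part (ii): checking that the twisted action of $\Gamma_\phi$ really preserves $N$, that the conjugating element is forced into $\Gamma_\phi$, and then converting the finite-index orbit comparison into the stated infinitude.
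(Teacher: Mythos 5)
Your proposal is correct. Part (i) is exactly the paper's argument: the quotient map sends $\phi$-classes onto $\bar\phi$-classes, so $R(\phi)\ge R(\bar\phi)=\infty$. In part (ii) your underlying mechanism is the same as the paper's --- pigeonhole over the finitely many cosets of $N$ --- but your packaging is genuinely different and somewhat more structural. The paper argues by contradiction with a sequence: if $x_k\in N$ lie in pairwise distinct $\phi|_N$-classes but in a single $\phi$-class, say $x_k=\gamma_k x_0\phi(\gamma_k^{-1})$, then finiteness of $\Gamma/N$ forces some $g:=\gamma_k\gamma_l^{-1}\in N$, and substituting gives $x_k=gx_l\phi|_N(g^{-1})$, a contradiction; this implicitly bounds by $|\Lambda|$ the number of $\phi|_N$-classes inside any $\phi$-class. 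You instead isolate the subgroup $\Gamma_\phi=\eta^{-1}(\mathrm{Fix}(\bar\phi))$, prove that the $\phi$-classes meeting $N$ are precisely the $\Gamma_\phi$-orbits in $N$ (via the identity $y=(gxg^{-1})(g\phi(g^{-1}))$, which forces the conjugator into $\Gamma_\phi$), and then invoke the standard fact that a finite-index subgroup splits each orbit into at most $[\Gamma_\phi:N]$ suborbits. What your route buys: a cleaner conceptual statement (the $\phi$-classes meeting $N$ form an explicit orbit space) and a sharper multiplicity bound $|\mathrm{Fix}(\bar\phi)|$ rather than $|\Lambda|$; what the paper's route buys is brevity --- it never needs $\Gamma_\phi$ or the double-coset bookkeeping, only the two pigeonhole steps. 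Both are complete proofs.
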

\begin{proof}
Let $\phi:\Gamma\to \Gamma$ be any automorphism.  Since $N$ is characteristic, 
$\phi$ restricts to an automorphism of $N$ and hence induces an automorphism $\bar{\phi}:\Lambda\to \Lambda$. 

(i).  Since $\phi$-twisted conjugacy classes are mapped to $\bar{\phi}$-twisted conjugacy classes, it follows that 
$R(\phi)\ge R(\bar\phi)$.  So $R(\phi)=\infty$ if $R(\bar\phi)=\infty$, which proves the assertion. 

(ii).  Let $\theta=\phi|_N$ and suppose that $R(\theta)=\infty$.   Suppose that $x_k\in N, k\ge 0$, are in pairwise distinct $\theta$-twisted conjugacy 
classes in $N$, but that they are all in the same $\phi$-twisted conjugacy class.   Let $\gamma_k\in \Gamma, 
k\ge 1, $ be such that $x_k=\gamma_k x_0\phi(\gamma_k^{-1})$.   Since $\Gamma/N\cong \Lambda$ is finite, 
there exist $k,l\ge 1$ such that $g:=\gamma_k\gamma_l^{-1}\in N$.   Substituting $x_0=\gamma_l^{-1} x_l \phi(\gamma_l)$, in 
$x_k=\gamma_k x_0\phi(\gamma_k^{-1})$ we obtain $x_k=gx_l\phi(g^{-1})=gx_l\theta(g^{-1})$ showing that 
$x_k$ and $x_l$ are in the same $\theta$-twisted conjugacy class, contrary to our choice.  Hence $R(\phi)=\infty$.  This completes the 
proof. 
\end{proof}

We are now ready to prove the $R_\infty$-property for lattices in the case when 
the Lie group is connected, has no compact factors, and has trivial centre.

\begin{theorem} \label{reducible}
Suppose that $\Lambda\subset G$ is a lattice where $G$ is a connected semisimple Lie group without 
compact factors and having trivial centre.  Then $\Lambda$ has the $R_\infty$-property.
\end{theorem}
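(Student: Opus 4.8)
The plan is to reduce, in two steps, to the $R_\infty$-property for irreducible lattices already established in \cite{ms-tg}, using Lemmas \ref{product} and \ref{characteristic} as the engines of reduction.

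First I would remove torsion. Since $G$ has trivial centre it is of adjoint type and hence linear, so $\Lambda$ is a finitely generated linear group and Selberg's lemma provides a torsion-free subgroup of finite index. Intersecting all subgroups of $\Lambda$ of that index---there are finitely many, as $\Lambda$ is finitely generated---yields a characteristic, finite-index, torsion-free subgroup $\Lambda_\ast \subset \Lambda$. As $\Lambda/\Lambda_\ast$ is finite, Lemma \ref{characteristic}(ii) reduces the theorem to showing that the torsion-free lattice $\Lambda_\ast$ has the $R_\infty$-property. If $\Lambda_\ast$ is irreducible this is exactly \cite{ms-tg}, so I may assume $\Lambda_\ast$ is reducible. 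Then the discussion opening \S3 gives the factorization $G = H_1 \times \cdots \times H_n$, and Lemma \ref{product}(ii) produces a finite-index characteristic subgroup $\Gamma_0 = \Gamma_1 \times \cdots \times \Gamma_n$ of $\Lambda_\ast$ with each $\Gamma_j$ an irreducible lattice in $H_j$. Applying Lemma \ref{characteristic}(ii) once more, it suffices to prove that the direct product $\Gamma_0$ has the $R_\infty$-property.

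The heart of the argument is this last claim. Given $\phi \in \aut(\Gamma_0)$, I would first apply Lemma \ref{product}(i) to $\Gamma_0$ itself (for which $\tilde\Gamma_j = \Gamma_j$): there is $\sigma \in S_n$ with $\phi(\Gamma_i) \subseteq \Gamma_{\sigma(i)}$, and since $\phi$ is surjective these inclusions are equalities, so $\phi$ genuinely permutes the direct factors. Decomposing $\sigma$ into disjoint cycles decomposes $\Gamma_0$ $\phi$-invariantly into a product of blocks $B_c = \prod_{i \in c} \Gamma_i$, and because twisted conjugacy classes of a product automorphism are products of twisted conjugacy classes of the factors, $R(\phi) = \prod_c R(\phi|_{B_c})$. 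For a single cycle $c = (i_1\, i_2 \cdots i_k)$, a direct computation shows every element of $B_c$ is $\phi$-twisted conjugate to one supported on the first coordinate $\Gamma_{i_1}$, and that two such elements $z, z'$ are $\phi$-twisted conjugate in $B_c$ exactly when they are $\psi$-twisted conjugate in $\Gamma_{i_1}$, where $\psi := \phi^k|_{\Gamma_{i_1}}$ is the monodromy automorphism obtained by composing the isomorphisms around the cycle. Hence $R(\phi|_{B_c}) = R(\psi)$, and since $\Gamma_{i_1}$ is an irreducible lattice in the connected centreless semisimple group $H_{i_1}$ without compact factors, \cite{ms-tg} gives $R(\psi) = \infty$. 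Therefore $R(\phi) = \infty$, which completes the proof.

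The step I expect to be the main obstacle is the cycle computation in the last paragraph: correctly identifying a normal-form representative for each $\phi$-twisted conjugacy class in a cyclically permuted block and checking that the twisted conjugacy relation collapses precisely to $\psi$-twisted conjugacy on the single factor $\Gamma_{i_1}$. Everything else is bookkeeping---the multiplicativity of Reidemeister numbers over $\phi$-invariant direct factors, and the passage from $\phi(\Gamma_i)\subseteq\Gamma_{\sigma(i)}$ to equality. It is worth emphasizing that the argument genuinely needs the full strength of \cite{ms-tg} for the possibly nontrivial automorphism $\psi$ of a factor, not merely for the identity automorphism.
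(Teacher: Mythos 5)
Your proposal is correct, and its core is the same as the paper's: reduce via Lemma \ref{product}(ii) and Lemma \ref{characteristic}(ii) to a characteristic product $\Gamma_0=\Gamma_1\times\cdots\times\Gamma_n$ of irreducible lattices, use Lemma \ref{product}(i) to see that an automorphism permutes the factors, decompose into cycles, and show that twisted conjugacy on a cyclic block collapses to $\psi$-twisted conjugacy on one factor, where $\psi$ is the monodromy around the cycle; your normal-form computation and the paper's displayed equations $y_1=z_1x_1\phi_1\phi_r\cdots\phi_2(z_1^{-1})$ are the same calculation. There are two differences worth recording. First, your opening move---passing to a torsion-free characteristic subgroup $\Lambda_\ast$ via Selberg's lemma before invoking Lemma \ref{product}---is \emph{not} in the paper: the paper applies Lemma \ref{product} directly to $\Lambda$, even though that lemma is stated only for \emph{torsionless} lattices, so your preamble actually closes a small gap (or at least discharges a hypothesis the paper leaves implicit). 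Second, in the opposite direction, where you cite \cite{ms-tg} for $R(\psi)=\infty$ on the irreducible factor $\Gamma_{i_1}$ in all cases, the paper is more careful: it invokes \cite{ms-tg} only when the factor $H_{i_1}$ has real rank at least $2$, and appeals to \cite{ll} (uniform) and \cite{felshtyn} (nonuniform) when $H_{i_1}$ has rank one; you should add those citations (or verify that the main theorem of \cite{ms-tg} as stated covers rank one), since this is the one point where your argument leans on a reference for slightly more than the paper itself is willing to claim from it. Your observation that the full strength of the irreducible-lattice result for arbitrary $\psi$ is needed, not just for the identity, is exactly right and matches the paper's use of it.
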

\begin{proof} Let $\phi:\Lambda\to \Lambda$ be any automorphism.  We need to show that $R(\phi)=\infty$. 
By the main result of \cite{ms-tg}, we may (and do) assume that $\Lambda$ is reducible. 

In view of Lemma \ref{product}, there exists a finite index characteristic subgroup $\Gamma_0\subset \Lambda$
which is a product $\Gamma_1\times \cdots\times \Gamma_n$ where each $\Gamma_j$ is an irreducible lattice 
in a connected normal subgroup $H_j$ of $G$ and moreover $\phi$ permutes the subgroups $\Gamma_j$. 
The cycle decomposition of the permutation $\sigma\in S_n$ where $\phi(\Gamma_j)=\Gamma_{\sigma(j)}$ 
leads to a product decomposition $\Lambda=L_1\times \cdots L_r$  where each $L_i$ is the product 
of those $\Gamma_j$ which form an orbit of $\sigma$.  Then $\phi$ restricts to an automorphism of $L_i$
for each $i$.  By relabelling if necessary, we assume that $L_1=\Gamma_1\times \cdots\times \Gamma_p,$ 
$\phi(\Gamma_j)=\Gamma_{j+1}, 1\le j<p, \phi(\Lambda_p)=\Lambda_1$.  
In particular $\Gamma_i\cong \Gamma_1, 1\le i\le p$.  
Let $\gamma_k=(\gamma_{k1},\cdots,\gamma_{kn})\in \Lambda, k\ge 1,$ be a sequence of elements in $\Gamma_0=\prod_{1\le i\le n}\Gamma_i$ satisfying the following conditions, where we  denote by $\phi_i:\Gamma_i\to \Gamma_{i+1}, 1\le i<r$ and $\phi_r:\Gamma_r\to \Gamma_1,$ the isomorphisms obtained from $\phi$ via restriction.\\
(i) $\gamma_{ki}=1, 1< i\le n$,  for all $k\in \mathbb{N}$, and,\\
(ii) the $\gamma_{k,1}\in \Gamma_1$ are in pairwise distinct $\psi$-twisted conjugacy classes where $\psi:=\phi^r|_{\Gamma_1}=\phi_1\circ\phi_r\circ \cdots\circ\phi_2\in \aut(\Gamma_1)$. 
Since $\Gamma_1$ is an irreducible lattice, such a sequence exists by the main theorem of \cite{ms-tg} when the real rank of 
$H_1$ is at least $2$, by \cite{ll} when $\Gamma_1$ is uniform and the real rank of $H_1$ equals $1$, and by the 
result of \cite{felshtyn} when $\Gamma_1$ is nonuniform and the rank of $H_1$ equals $1$. 

We claim that the $\gamma_k$ are in pairwise distinct $\phi$-twisted conjugacy classes.  Observe that the claim implies that 
$R(\phi)=\infty$, thereby establishing the theorem.  

First, suppose that $y=zx\phi(z^{-1})$ where $y=(y_1,1,\ldots, 1), x=(x_1, 1,\ldots,1)\in \Gamma_0$ and 
$z=(z_1,\ldots,z_n)\in \Gamma_0$.  This is equivalent to the following set of equations:
\[y_1=z_1x_1\phi_1(z_r^{-1});  1=z_i .\phi_{i}(z_{i-1}^{-1}), 1<i\le r.\]
It follows that $y_1=z_1x_1\phi_1\phi_{r}(z_{r-1}^{-1})=\cdots=z_1x_1\phi_1\phi_{r}\cdots \phi_2(z_1^{-1})=z_1x_1\psi(z_1^{-1})$.  
Hence, if $\gamma_k$ and $\gamma_l$ are in the same $\phi$-twisted conjugacy class, it follows from (i) that 
$\gamma_{k1}$ and $\gamma_{l1}$ are $\psi$-twisted conjugates, contradicting (ii). Hence $\gamma_k\in \Lambda, k\ge 1,$ 
are in pairwise distinct $\phi$-twisted conjugacy classes as was to be shown.  
\end{proof}

\begin{remark} \label{rankone}  {\em 
As already observed in \cite[Remark 3.3]{ms-tg}, the method of proof of \cite[Theorem 1.2]{ms-tg} applies to show that 
any irreducible lattice $\Lambda$ in a semisimple Lie group $G$ of rank one 
with trivial centre and finitely many components has the $R_\infty$-property when the identity component $G^0$ of 
$G$ is not locally isomorphic to $SL(2,\mathbb{R})$.  When $G^0$ is locally isomorphic to $SL(2,\mathbb{R})$ 
and $\Lambda$ is nonuniform, it has a finite index subgroup isomorphic to a free group of finite rank. Applying Lemma \ref{characteristic}, it suffices to consider the case $G^0=SL(2,\mathbb{R})$ and $\Lambda=SL_2(\mathbb{Z})$. 
There are several proofs available for the $R_\infty$-property of $SL(2,\mathbb{Z})$ that does not rely on the work of Levitt and Lustig \cite{ll}.  See, for example, \cite[Theorem 3.1]{ms-cmb} 
and also \cite{nasibullov}.  Also Gon\c{c}alves and Wong \cite[Theorem 3.4]{gw} have given a purely algebraic proof that a free group of rank $2$ 
has the $R_\infty$-property.  
Using Lemma \ref{characteristic} again, one deduces the $R_\infty$-property for any non-abelian free group of finite rank 
 as well as for $SL(2,\mathbb{Z})$.   (See, also, \cite[Theorem 3.4]{dg14}.)
From Lemma \ref{characteristic} again, one deduces the $R_\infty$-property for $SL(2,\mathbb{Z})$.   
Dekimpe and Gon\c calves \cite{dg15} have obtained a 
purely algebraic proof of the $R_\infty$-property of  the fundamental group of 
a compact oriented surface of genus $g\ge 2$---which is the same as a torsionless uniform lattice in $SL(2,\mathbb{R})$.   
Thus one has algebraic proofs available in the case rank $1$ lattices as an alternative to the geometric group theoretic proofs due to 
Levitt and Lustig \cite{ll} and to Fel'shtyn \cite{felshtyn}.}
\end{remark}

\section{Proof of Theorem \ref{main}}
We are now ready to establish the main result of this note.
 
{\it Proof of Theorem \ref{main}:}
Let $\Lambda$ be an irreducible 
lattice in a non-compact semisimple Lie group $G$ with finite centre and finitely many connected component. 
Let $M$ be the maximal connected compact normal subgroup of $G$.  As remarked in \S1, $\bar\Lambda:=\Lambda/(M\cap \Lambda)$ is an irreducible lattice  
in the connected semisimple Lie 
group $\bar{G}=G/M$, which has trivial centre.  Suppose that $\Gamma$ is a finitely generated group which 
is quasi-isometric to $\Lambda$.  Then it is quasi-isometric to $\bar{\Lambda}$.  By Theorem \ref{rigidlattices}, there exists 
an exact sequence 
\[ 1\to F\hookrightarrow \Gamma \stackrel{\eta}{\to} \Lambda_0\to 1\]
where $\Lambda_0$ is a lattice in $\bar G$.  By Theorem \ref{classlattices}, when $\Lambda$ is nonuniform, so is $\Lambda_0$.
When $\Lambda$ is uniform, however, $\Lambda_0$ is not necessarily irreducible.

In general $F$ is not characteristic in $\Gamma$.  In order to overcome this,
we proceed as follows.  Since $\bar G$ is linear, any finitely generated subgroup of $\bar G$ is residually finite; in particular, 
$\Lambda_0$ is residually finite.  Again, linearity of $\bar G$ implies that there exists a finite index torsionless sublattice $\Lambda_1
\subset \Lambda_0$.   Let $\Gamma_1=\eta^{-1}(\Lambda_1)$.  Note that $\Gamma_1$ has finite index in $\Gamma$.  
Let $\Gamma_0$ be a finite index {\it characteristic} subgroup of $\Gamma$ that is contained in $\Gamma_1$.  (For instance, 
we may take $\Gamma_0$ to be the intersection of all subgroups of $\Gamma$ having index equal to the index 
of $\Gamma_1$ in $\Gamma$.)  

To show that $\Gamma$ has the $R_\infty$-property, by Lemma \ref{characteristic}(ii), it suffices to show 
that $\Gamma_0$ has the $R_\infty$-property.   Set 
$F_0:=F\cap \Gamma_0, \eta_0:=\eta|_{\Gamma_0},$ and $ \Lambda_0':=\eta(\Gamma_0)$.  
Then we have an exact sequence 
\[ 1\to F_0 \hookrightarrow \Gamma_0\stackrel{\eta_0}{\to} \Lambda_0'\to 1. \eqno(2)\]

We claim that $F_0$ is a {\it characteristic} subgroup of $\Gamma_0$.  Note that $F_0$ consists of {\it all} finite order 
elements of $\Gamma_0$ since $\Lambda_0'$ has no (nontrivial) torsion elements. Since $F_0\subset F$ is finite, it 
follows that $F_0$ {\it equals} the set of all finite order elements of $\Gamma_0$.     
Since any automorphism of $\Gamma_0$ permutes the set of finite order elements of $\Gamma_0$ among themselves, 
we conclude that $F_0$ is characteristic in $\Gamma_0$.  

Applying Lemma \ref{characteristic}, we see that, $\Gamma_0$ has the $R_\infty$-property since $\Lambda'_0$ has the same 
property by Theorem \ref{reducible}.  This completes the proof. \hfill $\Box$.  

As a corollary of the above {\it proof}, we obtain the following

\begin{theorem}
Any  lattice $\Gamma$  in a semisimple real Lie group $G$ with finite centre and finitely many 
connected component has the $R_\infty$-property.  
\end{theorem}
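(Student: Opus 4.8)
The plan is to reduce to Theorem \ref{reducible} by quotienting out the compact part of $G$, thereby producing an exact sequence of exactly the shape handled in the proof of Theorem \ref{main}, and then to transplant that argument word for word.

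First I would let $M$ be the maximal compact normal subgroup of $G$, so that $\bar G := G/M$ is connected (by Mostow's theorem), semisimple, of adjoint type, and without compact factors; in particular $\bar G$ has trivial centre, as noted in \S1. Put $F := \Gamma \cap M$; since $\Gamma$ is discrete and $M$ is compact, $F$ is finite, and the image $\bar\Gamma := \Gamma/F$ is a lattice in $\bar G$. This yields the exact sequence
\[ 1 \to F \hookrightarrow \Gamma \stackrel{\eta}{\to} \bar\Gamma \to 1, \]
which has precisely the form of the sequence extracted from Theorem \ref{rigidlattices} in the proof of Theorem \ref{main}. The essential point is that here the sequence is handed to us directly by the quotient map $G \to \bar G$, so no appeal to quasi-isometric rigidity is needed; and $\bar\Gamma$ is permitted to be reducible, which is harmless because Theorem \ref{reducible} applies to \emph{all} lattices in $\bar G$.

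From here I would repeat, verbatim, the argument that follows the analogous sequence in the proof of Theorem \ref{main}. Linearity of $\bar G$ makes $\bar\Gamma$ residually finite, so it contains a finite index torsionless sublattice $\bar\Gamma_1$; set $\Gamma_1 := \eta^{-1}(\bar\Gamma_1)$ and let $\Gamma_0$ be a finite index characteristic subgroup of $\Gamma$ contained in $\Gamma_1$ (for instance the intersection of all subgroups of $\Gamma$ of a fixed index). With $F_0 := F \cap \Gamma_0$ and $\Lambda_0' := \eta(\Gamma_0)$, the quotient $\Lambda_0'$ is torsionless, whence $F_0$ is exactly the set of finite order elements of $\Gamma_0$ and is therefore characteristic in $\Gamma_0$. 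Since $\Lambda_0'$ is a finite index subgroup of $\bar\Gamma$, it is again a lattice in $\bar G$ and so enjoys the $R_\infty$-property by Theorem \ref{reducible}. Lemma \ref{characteristic}(i) then gives the $R_\infty$-property for $\Gamma_0$, and Lemma \ref{characteristic}(ii), applied to the finite index characteristic subgroup $\Gamma_0 \subset \Gamma$, finally yields it for $\Gamma$.

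The sole genuine obstacle is that $F = \Gamma \cap M$ need not be characteristic in $\Gamma$, so Lemma \ref{characteristic}(i) cannot be invoked on the original sequence. This is the very difficulty already met in the proof of Theorem \ref{main}, and the passage to $\Gamma_0$ with torsionless image resolves it by characterising $F_0$ intrinsically as the torsion subgroup of $\Gamma_0$. The only background facts I would pause to confirm are standard: that the image of a lattice under the quotient by a compact normal subgroup is again a lattice, and that a finite index subgroup of a lattice is a lattice.
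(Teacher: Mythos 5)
Your proposal is correct and follows essentially the same route as the paper: quotient by the maximal compact normal subgroup $M$ to get the exact sequence $1\to \Gamma\cap M\to\Gamma\to\bar\Gamma\to 1$ directly (no quasi-isometric rigidity needed), pass to a finite index characteristic subgroup $\Gamma_0$ with torsionless image so that $F_0$ becomes characteristic as the torsion set, and then combine Theorem \ref{reducible} with both parts of Lemma \ref{characteristic}. The only cosmetic difference is that the paper first reduces to the case of reducible $\Gamma$ (citing \cite{ms-tg} for irreducible ones), whereas you handle both cases uniformly, which is harmless since Theorem \ref{reducible} covers all lattices in $\bar G$.
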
    
{\it Outline of proof.}  We need only consider the case when $\Gamma$ is reducible.
One has an exact sequence (1) with $F$ finite and $\Lambda$ a lattice in 
$\bar{G}=G/M$ where $M$ is a maximal compact normal subgroup of $G$.  Also one has a
characteristic subgroup $\Gamma_0$ of $\Gamma$ and an exact sequence (2) where $F_0$ is a 
finite characteristic subgroup of $\Gamma_0$ and $\Lambda_0'$ is a lattice in $\bar G$. 
Since $\bar{G}$ has trivial centre and no compact factors, $\Lambda_0'$ has the 
$R_\infty$-property by Theorem \ref{reducible}.  By Lemma \ref{characteristic},  
$\Gamma_0$ has the $R_\infty$-property.   Hence, by the same lemma, $\Gamma$ also has the $R_\infty$-property.  
\hfill $\Box$

\begin{remark} {\em
If a group is quasi-isometric to a uniform lattice in a semisimple Lie group with finite centre and finitely many components, 
then it is also quasi-isometric to an {\em irreducible} uniform 
lattice and hence has the $R_\infty$-property.  
{\it We do not know if every finitely generated group quasi-isometric to a {\em nonuniform} lattice has the $R_\infty$-property.}  
}
\end{remark}

\noindent
{\bf Acknowledgments.}  This project was initiated during the visit of 
the first named author to the Institute of Mathematical Sciences, Chennai, in May 2016. 
She thanks the Institute for her visit and for its hospitality.  The second named author acknowledges 
the Department of Atomic Energy, Government of India, for partially supporting his research under a 
XII Plan Project.

\end{document}